\documentclass[11pt]{article}

\usepackage[letterpaper, margin=1in]{geometry}
\usepackage{cite}
\usepackage{amsmath, amssymb, amsthm}
\usepackage{graphicx}
\usepackage{float}
\usepackage{hyperref}
\usepackage{xcolor}

\theoremstyle{definition}
\newtheorem{definition}{Definition}[section]

\theoremstyle{plain}
\newtheorem{theorem}{Theorem}[section]
\newtheorem{proposition}[theorem]{Proposition}
\newtheorem{lemma}[theorem]{Lemma}
\newtheorem{corollary}[theorem]{Corollary}

\author{Miraj Samarakkody}
\title{Physics-Informed Neural Networks for Computing the Morse Index of the Critical Catenoid}
\date{\today}

\begin{document}
\maketitle

\begin{abstract}
	The Morse index of a free boundary minimal surface is encoded in its Jacobi--Steklov spectrum, and we test how faithfully a physics-informed neural network (PINN) reproduces that spectrum on a problem whose answer is already known in closed form. The benchmark is the critical catenoid in the unit ball $\mathbb{B}^3$, where it is well known that the Morse index equals $4$ and the nullity equals $2$. Separating the angular variable reduces the eigenvalue problem to a family of one-dimensional Robin problems on $[-T,T]$, one for each Fourier mode. A network that enforces the parity of each mode by construction, and carries the eigenvalue as a trainable parameter, returns the three eigenvalues below the stability threshold to within $10^{-6}$ to $10^{-4}$ of their exact values, with PDE residuals of order $10^{-4}$; assembling them recovers the index $4$ and the nullity $2$. We then track the spectrum along a one-parameter homotopy joining a flat reference operator to the catenoid Jacobi operator and identify the crossings at which the index changes. Since the critical catenoid is rigid, a fact we prove, this homotopy deforms operators rather than surfaces. We close by explaining how the same pipeline, with its one-dimensional solver replaced by a two-dimensional one, is poised to address genuinely geometric families in ellipsoidal balls, where the boundary curvature is no longer constant, and the Morse index is not yet known.
\end{abstract}

\medskip
\noindent\textbf{Keywords.} Free boundary minimal surfaces; Morse index; Jacobi--Steklov eigenvalues; Steklov spectrum; physics-informed neural networks; critical catenoid; spectral flow.

\smallskip
\noindent\textbf{MSC 2020.} 53A10; 49Q05; 58J50; 65N25; 68T07.

\section{Introduction and Motivation}

The Morse index of a minimal surface measures how unstable it is: it counts, with multiplicity, the independent directions in which the surface can be pushed to decrease area to second order. Classifying surfaces by this number has been a recurring theme. For closed minimal surfaces in the round sphere $\mathbb{S}^3$ the story is by now classical. Simons showed that the totally geodesic sphere has index $1$~\cite{simons01}, and Urbano proved that the Clifford torus is the only minimal surface of index $5$~\cite{urbano01}, a characterization that Marques and Neves would later put to use in their proof of the Willmore conjecture~\cite{marquesneves01}. A parallel theory exists for free boundary minimal surfaces (FBMS) in the unit ball $\mathbb{B}^3$, developed mainly by Fraser and Schoen~\cite{fraser01, fraser05}. There the equatorial disk has index $1$, and Tran~\cite[Theorem 1.3]{tran01} proved that the critical catenoid is the unique free boundary minimal annulus of index $4$, with nullity $2$. What matters for us is how Tran gets there: his method recasts the index as spectral data, pairing a fixed-boundary eigenvalue problem with a Dirichlet-to-Neumann (Jacobi--Steklov) map. The same index has since been recovered by other analytic routes as well~\cite{devyver01, smithzhou01}.

Our aim is not another proof of this number but a numerical route to the index that does not rely on a closed-form spectrum. The critical catenoid is the natural place to calibrate such a method: it is essentially the only free boundary minimal surface whose entire Jacobi--Steklov spectrum is known explicitly, so one can check, eigenvalue by eigenvalue, whether a solver returns the correct index. We use a physics-informed neural network (PINN), which represents the eigenfunction by a network and enforces the eigenvalue equation and its boundary conditions through the loss~\cite{raissi01}, with the eigenvalue itself carried as a trainable parameter. On this one-dimensional problem a PINN is not competitive with classical eigensolvers, and we claim no such thing; its value is that the same construction, unlike separation of variables, transfers without structural change to surfaces where no closed form is available. The target we have in mind is the free boundary problem in ellipsoidal balls, where the boundary curvature is non-constant, Tran's index formula no longer applies, and the Morse index is not known; the present paper builds and validates the pipeline on the catenoid as the step before that one.

This paper makes four main contributions. First, we develop a parity-constrained PINN that recovers the three Jacobi--Steklov eigenvalues of the critical catenoid below the stability threshold, and with them the Morse index $4$ and nullity $2$, to accuracy verified against the closed-form values (Sections~\ref{sec:pinn}--\ref{sec:index}). Second, we establish a spectral-gap statement isolating the eigenvalues responsible for the index (Corollary~\ref{cor:gap}). Third, we compute the spectral flow of a one-parameter operator homotopy from a flat reference operator to the catenoid Jacobi operator, identifying the eigenvalue crossings that change the index (Section~\ref{sec:flow}). Finally, we prove that the critical catenoid is rigid (Proposition~\ref{prop:rigid}); this shows the homotopy deforms operators rather than surfaces, and singles out ellipsoidal balls, where the index is open, as the setting for a genuinely geometric spectral flow (Section~\ref{sec:rigidity}).

To make this reduction precise we recall the Steklov framework of Fraser and Schoen~\cite{fraser01, fraser04, fraser05}, which underlies everything that follows.

The Steklov eigenvalues of $\Sigma$ arise from the harmonic extension of functions defined on the boundary $\partial \Sigma$. Given $h \in C^\infty(\partial \Sigma)$, let $\hat{h}$ denote its harmonic extension, that is, the unique solution of
\begin{equation}
	\begin{cases}
		\Delta \hat{h} = 0 & \text{ in } \Sigma          \\
		\hat{h} = h        & \text{ on } \partial \Sigma
	\end{cases}
\end{equation}
The Dirichlet-to-Neumann map associated with the Laplacian,
\begin{equation}
	\mathcal{L}_\Delta : C^\infty (\partial \Sigma) \to C^\infty (\partial \Sigma ),
\end{equation}
is defined by
\begin{equation}
	\mathcal{L}_\Delta h = \dfrac{\partial \hat{h}}{\partial \eta},
\end{equation}
where $\eta$ is the outward unit conormal along $\partial \Sigma$.

Since $\Delta$ is an elliptic self-adjoint operator, the harmonic extension exists and is unique, and $\mathcal{L}_\Delta$ is a non-negative self-adjoint operator with discrete spectrum
\begin{equation}
	0 = \xi_1 \leq \xi_2 \leq \dots \to \infty.
\end{equation}
The $\xi_i$ are called the \emph{Steklov eigenvalues} of $\Sigma$.

PINNs embed known mathematical or physical laws directly into the loss function of a neural network, enforcing them via automatic differentiation rather than classical numerical discretization~\cite{raissi01}. In the present context ``physics'' is the Jacobi operator eigenvalue problem: the network is trained to produce an eigenfunction while simultaneously satisfying the governing equation in the interior and the Robin-type boundary condition on the boundary circles of the catenoid. The eigenvalue itself is treated as a learnable parameter, updated by gradient descent alongside the network weights.

\section{Mathematical Setup}

Let $\Sigma^k \subset \Omega^{k+1}$ be a smooth, properly immersed, orientable free boundary minimal surface. Being two-sided, $\Sigma$ admits a globally defined smooth unit normal field $\nu$, so it suffices to consider normal variations $V = u\,\nu$ for a smooth function $u$. The second variation of volume of $\Sigma(t)$ at $\Sigma(0)$ defines the index bilinear form~\cite{maximo01}

\begin{equation}
	S(u,u) = \left.\dfrac{d^2}{dt^2}\,\mathrm{Vol}(\Sigma(t))\right|_{t=0} = \int_\Sigma \left( |\nabla^\Sigma u|^2 - \big(\mathrm{Ric}^\Omega(\nu, \nu) + |h|^2\big)\,u^2 \right) dA + \int_{\partial \Sigma} \langle \nabla_\nu^\Omega \nu, \eta\rangle\, u^2 \, ds,
\end{equation}

where superscripts indicate the ambient space in which each operation is taken. Here $|h|$ is the norm of the second fundamental form of $\Sigma \subset \Omega$, $\mathrm{Ric}^\Omega$ is the Ricci tensor of $\Omega$, and $\eta$ is the outward conormal of $\partial\Sigma$, which by the free boundary condition coincides with the outward unit normal of $\partial\Omega$. Consequently, the boundary integrand is the second fundamental form of $\partial\Omega$ evaluated on $\nu$:
\begin{equation}
	\langle \nabla_\nu^\Omega \nu, \eta \rangle = h^{\partial \Omega}(\nu, \nu),
\end{equation}
where $h^{\partial\Omega}(\cdot,\cdot)$ is taken with respect to the outward normal of $\partial\Omega \subset \Omega$.

\begin{definition}[Morse index and nullity]\label{def:index}
	The \emph{Morse index} of $\Sigma$ is the maximal dimension of a subspace $W \subseteq C^\infty(\Sigma)$ on which the index form $S$ is negative definite; that is,
	\begin{equation}
		\mathrm{Ind}(\Sigma) = \max\left\{ \dim W : W \subseteq C^\infty(\Sigma),\ S(u,u) < 0 \text{ for all } u \in W \setminus \{0\} \right\}.
	\end{equation}
	The \emph{nullity} of $\Sigma$ is the dimension of the null space of $S$, i.e.\ the space of $u \in C^\infty(\Sigma)$ for which $S(u,v) = 0$ for every $v \in C^\infty(\Sigma)$.
\end{definition}

Recall that the Jacobi operator is given by
\begin{equation}
	\mathcal{J} = \Delta_\Sigma + \mathrm{Ric}(\nu, \nu) + |h^\Sigma|^2.
\end{equation}
It is well known~\cite{maximo01, ambrozio01, sargent01} that the Morse index equals the number of negative eigenvalues, counted with multiplicity, of the following system:
\begin{equation}
	\begin{cases}
		\mathcal{J}u = -\lambda u                                             & \text{ in } \Sigma          \\
		\dfrac{\partial u}{\partial \eta} = - h^{\partial \Omega} (\nu, \nu)u & \text{ on } \partial \Sigma
	\end{cases}
\end{equation}

If we restrict to variations that fix the boundary, the boundary integral vanishes, leading to the following system:

\begin{equation}\label{eq:06}
	\begin{cases}
		\mathcal{J} u = - \lambda u & \text{ in } \Sigma          \\
		u \equiv 0                  & \text{ on } \partial \Sigma
	\end{cases}
\end{equation}

Because of the boundary condition, the number of negative eigenvalues of \eqref{eq:06} is generally smaller than the Morse index. The contribution of the boundary is captured by the following Dirichlet-to-Neumann map associated with the Jacobi operator.

Given a function $h \in C^\infty(\partial \Sigma)$, its \emph{Jacobi extension} $\hat{h}$ is a solution of the boundary value problem

\begin{equation}\label{eq:jacobi-ext}
	\begin{cases}
		\mathcal{J} \hat{h} = 0 & \text{ in } \Sigma          \\
		\hat{h} = h             & \text{ on } \partial \Sigma
	\end{cases}
\end{equation}

Unlike the harmonic extension, the Jacobi extension need not exist for every boundary datum, and when it does it is determined only up to a Jacobi field with vanishing boundary data; existence and the precise sense of uniqueness are governed by Lemma~\ref{lem:jacobi-ext} below. On the admissible subspace the construction is well posed, and the associated Dirichlet-to-Neumann map is defined by
\begin{equation}\label{eq:08}
	\mathcal{L}_\mathcal{J} h = \dfrac{\partial \hat{h}}{\partial \eta}.
\end{equation}
The operator $\mathcal{L}_\mathcal{J}$ has a discrete spectrum whose eigenvalues tend to infinity.

\begin{lemma}[{\cite[Lemma 2.5]{tran01}}]\label{lem:jacobi-ext}
	Let
	\begin{equation}
		\mathrm{J}_0^0 = \left\{ w \in C^\infty(\Sigma) : \mathcal{J} w = 0 \text{ in } \Sigma,\ w = 0 \text{ on } \partial \Sigma \right\}
	\end{equation}
	denote the space of Jacobi fields with vanishing boundary data, and let
	\begin{equation}
		D_\eta \mathrm{J}_0^0 = \left\{ \left.\dfrac{\partial w}{\partial \eta}\right|_{\partial \Sigma} : w \in \mathrm{J}_0^0 \right\}
	\end{equation}
	be the corresponding space of conormal derivatives along $\partial \Sigma$. Given $h \in C^\infty(\partial \Sigma)$, the Jacobi extension problem~\eqref{eq:jacobi-ext} admits a solution $\hat{h}$, unique up to the addition of an element of $\mathrm{J}_0^0$, if and only if
	\begin{equation}
		\int_{\partial \Sigma} h\, b \, ds = 0 \qquad \text{for all } b \in D_\eta \mathrm{J}_0^0.
	\end{equation}
	Equivalently, $h \in (D_\eta \mathrm{J}_0^0)^\perp \subset C^\infty(\partial \Sigma)$ with respect to the $L^2$ inner product on $\partial \Sigma$.
\end{lemma}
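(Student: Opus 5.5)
The plan is to reduce the inhomogeneous Dirichlet problem~\eqref{eq:jacobi-ext} to a problem with zero boundary data and inhomogeneous right-hand side, and then apply the Fredholm alternative for the self-adjoint elliptic operator $\mathcal{J}$ with Dirichlet conditions. Given $h \in C^\infty(\partial\Sigma)$, we first choose any smooth extension $H \in C^\infty(\Sigma)$ with $H|_{\partial\Sigma} = h$; for instance the harmonic extension from the Steklov setup will do. Writing $\hat h = H + v$, problem~\eqref{eq:jacobi-ext} becomes
\begin{equation*}
	\mathcal{J} v = -\mathcal{J} H =: f \quad \text{in } \Sigma, \qquad v = 0 \quad \text{on } \partial\Sigma .
\end{equation*}

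The second step is the Fredholm alternative. The Dirichlet realization of $\mathcal{J}$ on $H^2(\Sigma)\cap H^1_0(\Sigma)$ is self-adjoint with compact resolvent, since $\Sigma$ is compact with smooth boundary and the potential $\mathrm{Ric}(\nu,\nu)+|h^\Sigma|^2$ is smooth. Its kernel is exactly $\mathrm{J}_0^0$, which is finite-dimensional and consists of smooth functions by elliptic regularity. Hence $\mathcal{J}v=f$ with $v|_{\partial\Sigma}=0$ is solvable if and only if $f \perp \mathrm{J}_0^0$ in $L^2(\Sigma)$. The solution is unique modulo $\mathrm{J}_0^0$, and it is smooth by elliptic regularity up to the boundary. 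This already gives the uniqueness clause: two Jacobi extensions of the same $h$ differ by an element of $\mathrm{J}_0^0$.

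The third step, which is the key computation, translates the interior orthogonality condition into a boundary condition via Green's second identity. For $w \in \mathrm{J}_0^0$ we have $\mathcal{J}$ symmetric, so the potential terms cancel and
\begin{equation*}
	\int_\Sigma (w\,\mathcal{J}H - H\,\mathcal{J}w)\, dA = \int_{\partial\Sigma} \Big( w\,\frac{\partial H}{\partial\eta} - H\,\frac{\partial w}{\partial\eta}\Big)\, ds .
\end{equation*}
Using $\mathcal{J}w = 0$ and $w|_{\partial\Sigma}=0$, this gives
\begin{equation*}
	\int_\Sigma f\,w\,dA = -\int_\Sigma w\,\mathcal{J}H\,dA = \int_{\partial\Sigma} h\,\frac{\partial w}{\partial\eta}\,ds .
\end{equation*}
Therefore $f\perp \mathrm{J}_0^0$ holds exactly when $\int_{\partial\Sigma} h\,b\,ds=0$ for all $b\in D_\eta \mathrm{J}_0^0$. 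The computation also shows that the condition does not depend on the choice of $H$, as it must.

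The main obstacle is rigor in the Fredholm step rather than the algebra. We need to check that the Dirichlet problem for a Schrödinger-type operator whose potential has an indefinite sign is still Fredholm of index zero, with cokernel identified with the kernel by self-adjointness. The route is to add a large constant $c$ so that $-\mathcal{J}+c$ is coercive on $H^1_0(\Sigma)$ via Lax--Milgram. Its inverse is then compact, and the Riesz--Schauder theory of $I - (c+\lambda)(-\mathcal{J}+c)^{-1}$-type operators yields the alternative. Smoothness of $\hat h$ then follows from standard boundary regularity. The final equivalence with $(D_\eta\mathrm{J}_0^0)^\perp$ is just a restatement of the boundary condition.
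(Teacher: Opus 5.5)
Your proof is correct. Extending $h$ to $H$ and applying the Fredholm alternative to the Dirichlet realization of $\mathcal{J}$, whose kernel is $\mathrm{J}_0^0$, reduces solvability to $\mathcal{J}H \perp \mathrm{J}_0^0$, and Green's second identity $\int_\Sigma w\,\mathcal{J}H\,dA = -\int_{\partial\Sigma} h\,\partial_\eta w\,ds$ then converts this into $h \perp D_\eta \mathrm{J}_0^0$. The paper gives no proof of its own and simply imports the result as Tran's Lemma~2.5; your argument is the standard proof of that result, so there is no genuinely different route to compare against.
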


\begin{theorem}[{\cite[Theorem 1.2]{tran01}}]\label{thm:tran}
	Let $\Sigma^k \subset \Omega^{k+1}$ be a smooth, properly immersed, orientable FBMS for which $h^{\partial \Omega}(\nu, \nu) = -c$ is constant. Then, counting multiplicities,
	\begin{enumerate}
		\item the Morse index of $\Sigma$ equals the number of negative eigenvalues of the fixed-boundary problem~\eqref{eq:06} plus the number of eigenvalues of the Dirichlet-to-Neumann map~\eqref{eq:08} that are strictly less than $c$; and
		\item the nullity of $\Sigma$ equals the dimension of the eigenspace of~\eqref{eq:08} associated with the eigenvalue $c$.
	\end{enumerate}
\end{theorem}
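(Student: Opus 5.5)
The plan is to compare the index form $S$ with a decomposition of $C^\infty(\Sigma)$ adapted to the Jacobi extension. Since $h^{\partial\Omega}(\nu,\nu)=-c$, the form reads
\[
S(u,u)=\int_\Sigma\big(|\nabla u|^2-q\,u^2\big)\,dA - c\int_{\partial\Sigma}u^2\,ds,\qquad q=\mathrm{Ric}(\nu,\nu)+|h|^2 .
\]
Let $V_0=H^1_0(\Sigma)$ and let $V_{\mathcal J}$ be the space of Jacobi extensions $\hat h$ with $h\in(D_\eta\mathrm J^0_0)^\perp$, normalized to be $L^2$-orthogonal to $\mathrm J^0_0$; these are well defined by Lemma~\ref{lem:jacobi-ext}. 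Green's formula gives, for $w\in V_0$ and $\hat h\in V_{\mathcal J}$,
\[
S(w,\hat h)=-\int_\Sigma w\,\mathcal J\hat h\,dA=0,\qquad
S(\hat h,\hat h)=\int_{\partial\Sigma}h\,(\mathcal L_{\mathcal J}h-c\,h)\,ds .
\]
So $S$ splits into a Dirichlet part and a boundary part $Q(h)=\langle(\mathcal L_{\mathcal J}-c)h,h\rangle_{L^2(\partial\Sigma)}$.

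Next I would show that $V_0+V_{\mathcal J}$ is all of $H^1(\Sigma)$. Given $u$ with trace $h=u|_{\partial\Sigma}$, the obstruction to extending $h$ is its pairing with $D_\eta\mathrm J^0_0$. I would handle this by writing $D_\eta\mathrm J^0_0$ as $\mathrm{span}\{\partial_\eta w_i\}$ for a basis $w_i$ of $\mathrm J^0_0$. Each $\partial_\eta w_i$ is nonzero by unique continuation, since $w_i$ has zero Cauchy data otherwise. I would then adjust: the complement is realized by functions $\phi_i$ supported near the boundary with trace $\partial_\eta w_i$. The key computation is $S(\phi_i,w_j)=\int_{\partial\Sigma}\phi_i\,\partial_\eta w_j$, which is a nondegenerate matrix. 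So the pairs $(w_i,\phi_i)$ span hyperbolic planes for $S$, contributing $\dim\mathrm J^0_0$ to the index and nothing to the nullity.

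The count then reads as follows. The negative eigenvalues of \eqref{eq:06} give $\#\{\lambda<0\}$ plus the zero eigenspace $\mathrm J^0_0$. The hyperbolic planes contribute $\dim\mathrm J^0_0$ negative directions. The boundary form $Q$ contributes $\#\{\text{eigenvalues of }\mathcal L_{\mathcal J}<c\}$. To match the statement exactly, I would have to check the convention the paper uses: either zero Dirichlet eigenvalues are counted alongside the negative ones, or the DtN spectrum on the admissible subspace absorbs them. I would follow Tran's convention that eigenvalues of \eqref{eq:06} counted as ``negative'' include the correct accounting, and verify this on a finite-dimensional model. Orthogonality of the pieces with respect to $S$ then gives $\mathrm{Ind}=\mathrm{Ind}(S|_{V_0})+\mathrm{Ind}(Q)$, using Definition~\ref{def:index}, the min-max characterization, and the fact that $S$ has finite index.

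For nullity, a null vector $u$ has $\mathcal Ju=0$ and $\partial_\eta u=c\,u$ on the boundary. Hence $u=\widehat{u|_{\partial\Sigma}}$ with $\mathcal L_{\mathcal J}(u|_{\partial\Sigma})=c\,u|_{\partial\Sigma}$. Conversely, every $c$-eigenfunction of $\mathcal L_{\mathcal J}$ extends to a null vector. The map is injective modulo $\mathrm J^0_0$, and elements of $\mathrm J^0_0$ are not null, because $\partial_\eta w\neq0$. This yields statement 2.

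The main obstacle is the degenerate case $\mathrm J^0_0\neq0$. There the Jacobi extension fails to be surjective, and the bookkeeping between the Dirichlet zero modes and the missing boundary directions must be shown to cancel exactly. I would first treat $\mathrm J^0_0=0$, where the $S$-orthogonal splitting $H^1=V_0\oplus V_{\mathcal J}$ is immediate. I would then handle the general case through the hyperbolic-plane argument above.
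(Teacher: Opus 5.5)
The paper gives no proof of this statement; it quotes it from Tran. So I assess your proposal on its own terms. When $\mathrm{J}_0^0=0$, your $S$-orthogonal splitting $H^1=H^1_0\oplus V_{\mathcal J}$ does prove the result. The degenerate case is where it fails, and the fix you propose cannot work.

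\textbf{The count.} Your hyperbolic-plane computation shows that the $m=\dim\mathrm{J}_0^0$ Dirichlet zero modes create $m$ \emph{additional} negative directions, and nothing cancels them. The correct identity is
\[
\mathrm{Ind}(\Sigma)=\#\{\lambda<0\}+\dim\mathrm{J}_0^0+\#\{\delta<c\}=\#\{\lambda\le 0\text{ of }\eqref{eq:06}\}+\#\{\delta<c\}.
\]
This is also the form the paper actually uses: ``non-positive'' after \eqref{eq:cat-robin}, and $\#\{\lambda\le0\}$ in Section~\ref{sec:index}. Read literally, with ``negative'', item~1 is false whenever $\mathrm{J}_0^0\neq 0$. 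The critical catenoid is a counterexample. There $\zeta\in\mathrm{J}_0^0$ and $\partial_\eta\zeta$ is constant on $\partial\Sigma$, so the $n=0$ even mode is excluded from the admissible subspace. The literal formula then gives $0+3\neq 4$. So do not appeal to ``Tran's convention'' or hope the bookkeeping cancels; state and prove the non-positive version. Your displayed conclusion $\mathrm{Ind}=\mathrm{Ind}(S|_{V_0})+\mathrm{Ind}(Q)$ also contradicts your own hyperbolic-plane count from the preceding paragraph.

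\textbf{The additivity step.} This fails as you set it up. First, $V_0+V_{\mathcal J}$ is not all of $H^1$, since its traces fill only $(D_\eta\mathrm{J}_0^0)^\perp$. Second, the added $\phi_i$ are not $S$-orthogonal to the other pieces. For $v\in H^1_0$ one has $S(\phi_i,v)=-\int_\Sigma v\,\mathcal J\phi_i$, and also $S(\phi_i,\hat h)=\int_{\partial\Sigma}\partial_\eta w_i\,\partial_\eta\hat h$; neither vanishes in general. So ``orthogonality of the pieces'' is not available.

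A clean repair works with $I=\mathrm{J}_0^0$ directly.
\begin{enumerate}
\item For $w\in I$ one has $S(u,w)=\int_{\partial\Sigma}u\,\partial_\eta w$. Hence $I$ is $S$-isotropic, and by unique continuation $I\cap\ker S=0$.
\item For such a finite-dimensional subspace, $\mathrm{Ind}(S)=\dim I+\mathrm{Ind}(S|_{I^{\perp_S}})$. To see this, take $\phi_i$ with $S(\phi_i,w_j)=\delta_{ij}$ and correct them by elements of $I$ so that $S(\phi_i,\phi_j)=0$. Split off the nondegenerate block $H=I\oplus\mathrm{span}\{\phi_i\}$, which has index $m$. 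Then $I^{\perp_S}=I\oplus H^{\perp_S}$, with $I$ in the radical.
\item By Lemma~\ref{lem:jacobi-ext}, $I^{\perp_S}=H^1_0\oplus V_{\mathcal J}$. This is an $S$-orthogonal sum of index $\#\{\lambda<0\}+\#\{\delta<c\}$.
\end{enumerate}

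\textbf{Nullity.} Your argument is essentially right, with one adjustment. Define $\mathcal L_{\mathcal J}h$ as the $L^2(\partial\Sigma)$-projection of $\partial_\eta\hat h$ onto $(D_\eta\mathrm{J}_0^0)^\perp$; with your normalization, $\partial_\eta\hat h$ need not lie there. In the converse direction, subtract from $\hat h$ the unique $w\in\mathrm{J}_0^0$ with $\partial_\eta w=\partial_\eta\hat h-c\,h$.
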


The critical catenoid is conjectured to be the unique (up to congruence) free boundary minimal annulus in $\mathbb{B}^3$ of genus zero with two boundary components, a conjecture due to Fraser~\cite{fraser03} with partial progress by McGrath~\cite{mcgarth01}. It is parametrized by
\begin{equation}
	X(t, \theta) = c (\cosh t \cos \theta, \cosh t \sin \theta, t),
\end{equation}
with $t \in [-T, T]$ and $\theta \in [0, 2\pi]$, where the constants $T$ and $c$ are determined by the free-boundary condition. (Figure \ref{fig:catenoid})

\begin{equation}
	\cosh T = T \sinh T \text{ and } c = \dfrac{1}{T \cosh T}
\end{equation}
Numerically, $T \approx 1.1997$, $\cosh T \approx 1.81$, $\sinh T \approx 1.51$, and $\tanh T \approx 0.83$. The unit outward normal is \begin{equation}
	\nu = \dfrac{1}{\cosh t}(\cos \theta, \sin \theta, -\sinh t),
\end{equation} and the squared norm of the second fundamental form is $|h|^2 = \dfrac{2}{c^2 \cosh^4 t}$.

\begin{figure}[H]
	\centering
	\includegraphics[width=0.6\textwidth]{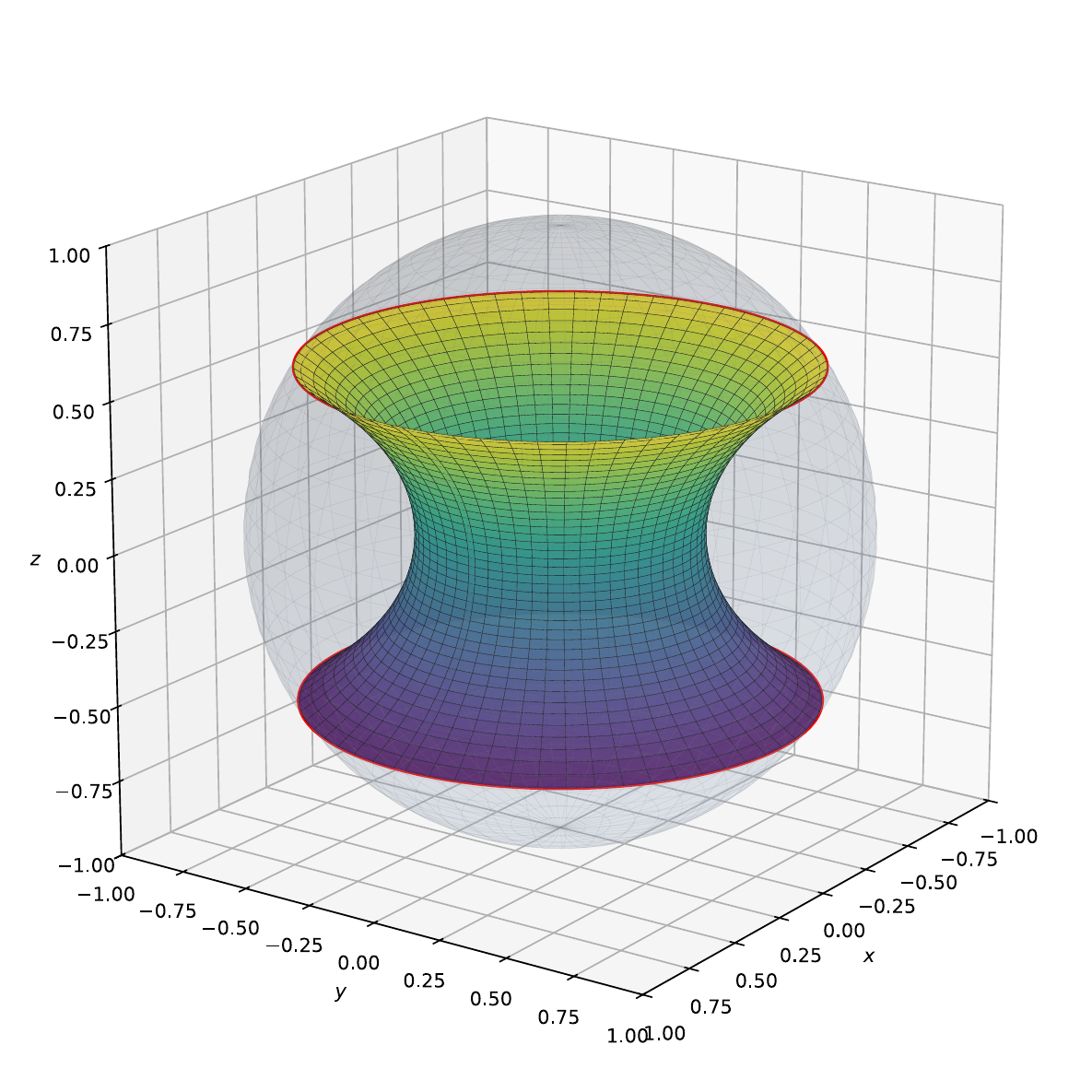}
	\caption{The critical catenoid in $\mathbb{B}^3$.}
	\label{fig:catenoid}
\end{figure}

\subsection{The Jacobi-Steklov Eigenvalue Problem}

For the critical catenoid the ambient space is flat, so $\mathrm{Ric}\equiv 0$ and the Jacobi operator is $\mathcal{J} = \Delta_\Sigma + |h|^2$. The induced metric is conformally flat,
\begin{equation}
	ds^2 = c^2 \cosh^2 t\,(dt^2 + d\theta^2),
\end{equation}
with conformal factor $\rho = c\cosh t$, so that $\Delta_\Sigma = \rho^{-2}(\partial_{tt} + \partial_{\theta\theta})$. Substituting $|h|^2 = 2/(c^2\cosh^4 t)$ gives
\begin{equation}\label{eq:cat-jacobi}
	\mathcal{J} = \frac{1}{c^2 \cosh^2 t}\left[ \partial_{tt} + \partial_{\theta\theta} + \frac{2}{\cosh^2 t} \right].
\end{equation}

Because the catenoid is rotationally symmetric about the $x_3$-axis, $\mathcal{J}$ commutes with $\partial_\theta^2$, and separation of variables $u(t,\theta) = f(t)g(\theta)$ with $g(\theta) \in \{\cos n\theta, \sin n\theta\}$ is valid. The conformal factor $\rho^{-2}$ does not affect the kernel of $\mathcal{J}$, so the Jacobi extension equation $\mathcal{J}\hat{h} = 0$ in \eqref{eq:jacobi-ext} reduces, for the $n$-th Fourier mode, to the ordinary differential equation on $[-T,T]$,
\begin{equation}\label{eq:cat-ode}
	f''(t) + \left( \frac{2}{\cosh^2 t} - n^2 \right) f(t) = 0.
\end{equation}

It remains to translate the Dirichlet-to-Neumann eigenvalue equation $\mathcal{L}_\mathcal{J}h = \delta h$ into a boundary condition for $f$, where $\delta$ denotes a Jacobi-Steklov eigenvalue. Along the boundary circle $t = +T$ the outward conormal is the unit vector $\eta = \rho^{-1}\partial_t$, so for a separated function the conormal derivative is
\begin{equation}
	\frac{\partial u}{\partial \eta} = \frac{1}{c\cosh T}\, f'(+T)\, g(\theta).
\end{equation}
The free-boundary relation $\cosh T = T\sinh T$ together with $c = 1/(T\cosh T)$ yields the identity $(c\cosh T)^{-1} = T$. Hence on the circle $t = +T$ the equation $\mathcal{L}_\mathcal{J}h = \delta h$ becomes $T f'(+T) = \delta f(+T)$; on the circle $t = -T$ the outward conormal is $-\rho^{-1}\partial_t$, and the same computation gives $-T f'(-T) = \delta f(-T)$. The Jacobi-Steklov problem is therefore the family of Robin eigenvalue problems
\begin{equation}\label{eq:cat-robin}
	\begin{cases}
		f''(t) + \left( \dfrac{2}{\cosh^2 t} - n^2 \right) f(t) = 0, & t \in (-T, T), \\[6pt]
		T f'(+T) = \delta f(+T), \quad -T f'(-T) = \delta f(-T).     &
	\end{cases}
\end{equation}

For the unit ball the boundary $\partial\mathbb{B}^3$ is the unit sphere, whose second fundamental form with respect to the outward normal is $h^{\partial\mathbb{B}^3}(\nu,\nu) = -1$. The threshold constant of Theorem~\ref{thm:tran} is therefore $1$ (not to be confused with the catenoid scale $c = 1/(T\cosh T)$), and the stability threshold for the Jacobi-Steklov eigenvalues is $\delta = 1$. By that theorem the Morse index equals the number of non-positive eigenvalues of the fixed-boundary problem~\eqref{eq:06} plus the number of eigenvalues $\delta < 1$ of~\eqref{eq:cat-robin}, counted with multiplicity, while the nullity is the dimension of the eigenspace at $\delta = 1$. The fixed-boundary contribution can be read off directly: a short computation gives
\begin{equation}\label{eq:zeta}
	\zeta := \langle X, \nu \rangle = \frac{c}{\cosh t}\bigl(\cosh t - t\sinh t\bigr),
\end{equation}
which is strictly positive on $(-T,T)$ and vanishes at $t = \pm T$ by the free-boundary condition. Thus $\zeta$ is a Jacobi field with zero boundary data, hence the first eigenfunction of~\eqref{eq:06} with eigenvalue $0$, and there are no negative eigenvalues; the fixed-boundary count is exactly $1$. The remaining task, counting the Jacobi-Steklov eigenvalues of~\eqref{eq:cat-robin} below $1$, is what we carry out numerically. Tran computed the spectrum of~\eqref{eq:cat-robin} in closed form~\cite[Theorem 4.2]{tran01}; the eigenvalues relevant to the index are collected in Table~\ref{tab:eig}.

\section{Physics-Informed Neural Network Method}\label{sec:pinn}

We solve the Robin eigenvalue problem~\eqref{eq:cat-robin} with a physics-informed neural network. The idea is straightforward: a network stands in for the eigenfunction $f$, and the eigenvalue $\delta$ rides along as a single trainable scalar, optimized together with the network weights. Training then pushes down three competing terms at once; the interior ODE residual, the Robin boundary residual, and a normalization that keeps the network away from the trivial solution $f \equiv 0$.

\subsection{Architecture and parity constraint}

The core network $N \colon \mathbb{R} \to \mathbb{R}$ is an ordinary fully connected net: four hidden layers of width $64$, with $\tanh$ activations. The one twist is parity. Each Fourier mode of~\eqref{eq:cat-robin} is either even or odd in $t$, and rather than leave the network to discover this on its own, we build it in. Writing $N$ for the raw network, we represent the eigenfunction as
\begin{equation}\label{eq:parity}
	f(t) = N(t) + N(-t) \quad \text{(even mode)}, \qquad f(t) = N(t) - N(-t) \quad \text{(odd mode)}.
\end{equation}
Without this, an unconstrained network drifts into mixed-parity configurations, spurious minima that drive down the residual while corrupting the eigenvalue, and the optimization is badly conditioned; with it, those minima simply vanish. This is the single change that makes everything else work. The eigenvalue $\delta$ starts near a target value and is then learned by gradient descent through the boundary loss: the optimizer finds it, we do not put it in by hand. Table~\ref{tab:arch} collects the settings.

\begin{table}[ht]
	\centering
	\begin{tabular}{lll}
		\hline
		\textbf{Component} & \textbf{Specification}      & \textbf{Notes}                 \\
		\hline
		Input              & $t \in [-T, T]$             & one collocation coordinate     \\
		Hidden layers      & $4 \times 64$, $\tanh$      & fully connected                \\
		Parity             & $f = N(t) \pm N(-t)$        & exact even/odd by construction \\
		Eigenvalue         & $\delta$ (trainable scalar) & learned by gradient descent    \\
		Optimizer          & Adam, lr $= 2\times10^{-3}$ & cosine annealing to $10^{-5}$  \\
		Collocation        & $600$ uniform points        & interior of $[-T, T]$          \\
		Epochs             & $9000$--$15000$             & per mode                       \\
		\hline
	\end{tabular}
	\caption{PINN architecture and training configuration for the one-dimensional Robin eigenvalue problem~\eqref{eq:cat-robin}.}
	\label{tab:arch}
\end{table}

\subsection{Loss function}

The total loss is $\mathcal{L} = \mathcal{L}_{\mathrm{pde}} + 5\,\mathcal{L}_{\mathrm{bc}} + 2\,\mathcal{L}_{\mathrm{norm}}$, with
\begin{align}
	\mathcal{L}_{\mathrm{pde}}  & = \frac{1}{M}\sum_{i=1}^{M} \left[ f''(t_i) + \left( \frac{2}{\cosh^2 t_i} - n^2 \right) f(t_i) \right]^2, \\
	\mathcal{L}_{\mathrm{bc}}   & = \bigl[ T f'(+T) - \delta f(+T) \bigr]^2 + \bigl[ -T f'(-T) - \delta f(-T) \bigr]^2,                      \\
	\mathcal{L}_{\mathrm{norm}} & = \left( \int_{-T}^{T} f^2 \, dt - 1 \right)^2,
\end{align}
where the $\{t_i\}_{i=1}^{M}$ are the $M = 600$ collocation points. The derivatives $f'$ and $f''$ come for free from automatic differentiation, and the normalization integral is evaluated by the trapezoidal rule on the same grid. We fixed the relative weights $5$ and $2$ once, at the outset, and left them untouched across every mode, and there was no per-mode tuning.

The solver is implemented in Python~3.12. The network, automatic differentiation, and the Adam optimizer are provided by PyTorch~\cite{pytorch01}; the spectral-flow ground truth of Section~\ref{sec:flow} uses the ODE-integration and root-finding routines of SciPy~\cite{scipy01} together with NumPy~\cite{numpy01}, and the figures are produced with Matplotlib~\cite{matplotlib01}.

\section{Morse Index of the Critical Catenoid}\label{sec:index}

\subsection{Recovered eigenvalues}

We trained one network per mode for the three Jacobi-Steklov eigenvalues at or below the threshold. With the parity constraint~\eqref{eq:parity} in place and $\delta$ learned by gradient descent, the eigenvalues are recovered to between $10^{-6}$ and $10^{-4}$ of their analytic values; the error is mode-dependent, the $n=1$ even mode converging to $\delta = -0.99990$ against the exact value $-1$. The PDE residuals at convergence are of order $10^{-4}$. Nothing about $\delta$ is supplied in advance; it is discovered through the loss, and the small errors, which vary from mode to mode, are simply what a finite-width network trained for finitely many epochs achieves on these smooth, low-frequency eigenfunctions. Table~\ref{tab:eig} collects the comparison, and Figure~\ref{fig:index} shows that the PINN eigenfunctions coincide with the analytic ones to plotting accuracy.

\begin{table}[ht]
	\centering
	\begin{tabular}{lccccc}
		\hline
		\textbf{Mode} & \textbf{Exact } $\delta$ & \textbf{PINN } $\delta$ & $|\textbf{error}|$ & \textbf{PDE resid} & $\delta < 1$?                          \\
		\hline
		$n=0$ odd     & $+0.43923$               & $+0.43923$              & $2.6\times10^{-6}$ & $6.9\times10^{-5}$ & \textcolor{red}{Yes ($\times 1$)}      \\
		$n=1$ even    & $-1.00000$               & $-0.99990$              & $9.7\times10^{-5}$ & $5.2\times10^{-4}$ & \textcolor{red}{Yes ($\times 2$)}      \\
		$n=1$ odd     & $+1.00000$               & $+1.00000$              & $1.2\times10^{-6}$ & $2.8\times10^{-4}$ & \textcolor{green!50!black}{No (null.)} \\
		$n=2$ even    & $+2.00000$               & ---                     & ---                & ---                & \textcolor{green!50!black}{No}         \\
		$n=3$ even    & $+3.42087$               & ---                     & ---                & ---                & \textcolor{green!50!black}{No}         \\
		\hline
	\end{tabular}
	\caption{Jacobi-Steklov eigenvalues of the critical catenoid. Exact values are from Tran~\cite[Theorem 4.2]{tran01}; PINN values are learned by gradient descent. Modes with $\delta < 1$ (red) contribute to the Morse index; $\times 2$ indicates that $\cos\theta$ and $\sin\theta$ give independent eigenfunctions. Networks were trained for $n=0$ and $n=1$; the $n\geq 2$ modes lie above threshold by the analytic formula.}
	\label{tab:eig}
\end{table}

\begin{figure}[H]
	\centering
	\includegraphics[width=0.8\textwidth]{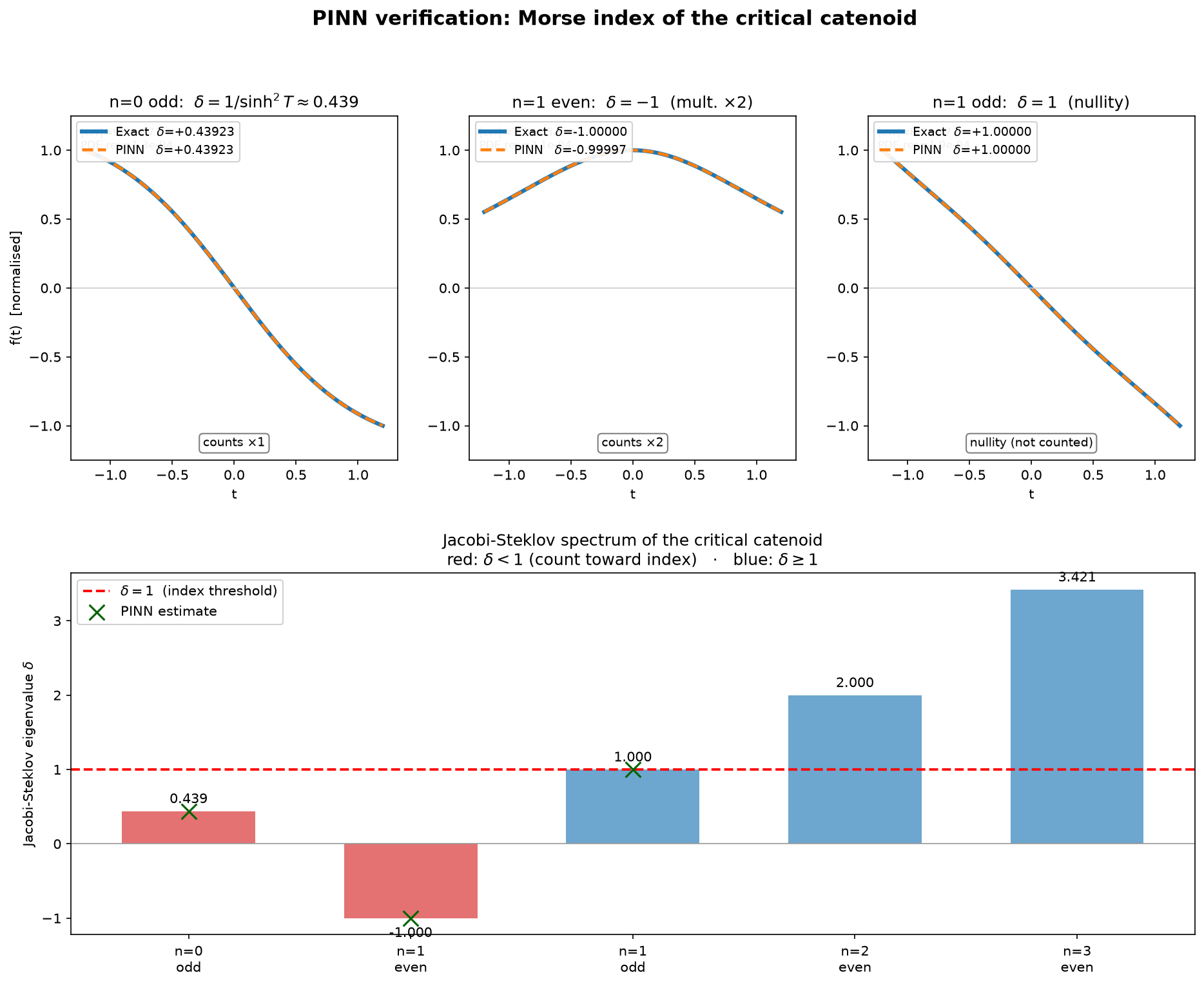}
	\caption{Top: analytic (solid) versus PINN (dashed) eigenfunctions for the three lowest Jacobi-Steklov modes, with per-mode eigenvalue error and PDE residual shown. The $n=0$ odd mode ($\delta\approx 0.439$) and the $n=1$ even mode ($\delta = -1$, multiplicity $2$) lie below the threshold $\delta = 1$ and count toward the index; the $n=1$ odd mode ($\delta = 1$) is the nullity mode. Bottom: the Jacobi-Steklov spectrum for $n=0$ to $3$; red bars mark $\delta < 1$, blue bars $\delta \geq 1$, and crosses the PINN estimates. The annotation displays the index computation in the form of Theorem~\ref{thm:tran}.}
	\label{fig:index}
\end{figure}

\subsection{Index, nullity, and a spectral gap}

Collecting the two contributions, the fixed-boundary count is $1$ (the Jacobi field $\zeta$ of~\eqref{eq:zeta}), and the Jacobi-Steklov eigenvalues below $1$ are $\delta = 1/\sinh^2 T \approx 0.439$ (multiplicity $1$) and $\delta = -1$ (multiplicity $2$), three in total. Therefore
\begin{equation}
	\mathrm{Ind}(\Sigma) = \#\{\lambda \leq 0 \text{ of }\eqref{eq:06}\} + \#\{\delta < 1 \text{ of } \mathcal{L}_\mathcal{J}\} = 1 + 3 = 4,
\end{equation}
and the nullity is the dimension of the $\delta=1$ eigenspace of $\mathcal{L}_\mathcal{J}$, namely $2$, spanned by the two rotational Jacobi fields. Both agree with Tran~\cite[Theorem 1.3]{tran01} and with the independent analytic computations of Smith--Zhou~\cite{smithzhou01} and Devyver~\cite{devyver01}. The computation also exhibits a clean spectral gap, which we record.

\begin{corollary}[spectral gap]\label{cor:gap}
	The Jacobi-Steklov operator $\mathcal{L}_\mathcal{J}$ of the critical catenoid has exactly three eigenvalues below $1$, counted with multiplicity: $\delta = 1/\sinh^2 T \approx 0.4392$ (multiplicity $1$) and $\delta = -1$ (multiplicity $2$). No eigenvalue lies in the open interval $(0.4392, 1)$; the next eigenvalue is $\delta = 1$ (the nullity, multiplicity $2$), followed by $\delta \approx 2$. In particular $\sigma(\mathcal{L}_\mathcal{J}) \cap (0.4392, 1) = \varnothing$.
\end{corollary}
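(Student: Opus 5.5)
The plan is to compute the spectrum of $\mathcal{L}_\mathcal{J}$ explicitly, mode by mode, from the Robin family~\eqref{eq:cat-robin}. The Fourier decomposition in $\theta$ is orthogonal on $L^2(\partial\Sigma)$, and each $n$-mode further splits into even and odd parts in $t$. So $\sigma(\mathcal{L}_\mathcal{J})$ is the union over $n\ge 0$ and both parities of the Robin eigenvalues. For $n\ge1$ each of these carries multiplicity $2$, from $\cos n\theta$ and $\sin n\theta$; for $n=0$ the multiplicity is $1$.

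For each mode, the two-dimensional solution space of~\eqref{eq:cat-ode} has explicit fundamental solutions. For $n=0$ these are $\tanh t$ and $t\tanh t-1$. For $n=1$ they are $1/\cosh t$ and $\sinh t+t/\cosh t$. For general $n$ they are $(n-\tanh t)e^{nt}$ and $(n+\tanh t)e^{-nt}$, the Pöschl--Teller form. Even and odd combinations give a single function $f_n^{\pm}$ per parity, and then $\delta = T f'(T)/f(T)$, since the condition at $-T$ is automatic by parity.

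I would then evaluate $\delta$ for each mode using $\cosh T=T\sinh T$:
\begin{itemize}
\item $n=0$ odd: $f=\tanh t$ gives $\delta = T/(\sinh T\cosh T)=1/\sinh^2T$.
\item $n=0$ even: $f=t\tanh t-1$ vanishes at $\pm T$, which is exactly the Jacobi field $\zeta$ of~\eqref{eq:zeta}. This lies in $\mathrm{J}_0^0$, so by Lemma~\ref{lem:jacobi-ext} it is excluded from the admissible Dirichlet-to-Neumann domain and contributes no Steklov eigenvalue.
\item $n=1$ even: $f=1/\cosh t$ gives $\delta=-T\tanh T=-1$.
\item $n=1$ odd: $f=\sinh t+t/\cosh t$. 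Here $f'(T)=\cosh T+(\cosh T - T\sinh T)/\cosh^2T=\cosh T$, and $f(T)=\sinh T+T/\cosh T$. Multiplying through by $\cosh T$ and using the free-boundary relation gives $\delta=1$.
\end{itemize}

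For $n\ge2$, I would show that every eigenvalue is at least $2$. The cleanest route is a comparison or monotonicity argument rather than case-by-case formulas. Write $\delta_n$ as the minimum of the Rayleigh quotient
\[
\frac{\int_{-T}^{T}\bigl(f'^2-(2\,\mathrm{sech}^2t-n^2)f^2\bigr)\,dt}{(f(T)^2+f(-T)^2)/T}
\]
over Jacobi extensions. This quotient is strictly increasing in $n^2$, so it suffices to handle $n=2$ in each parity and then invoke monotonicity. For $n=2$ the explicit solutions give closed forms: the even mode yields $\delta=2$ exactly, which is Tran's value, and the odd mode gives something larger. Within each $n$, Sturm comparison in the Robin parameter shows that the even eigenvalue lies below the odd one, so all remaining eigenvalues are at least $2$.

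Collecting the pieces, the eigenvalues below $1$ are $1/\sinh^2T$ with multiplicity $1$ and $-1$ with multiplicity $2$. The value $1$ occurs with multiplicity $2$, and everything else is at least $2$. The gap $(1/\sinh^2T,1)$ is therefore empty.

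I expect the main obstacle to be the $n\ge2$ lower bound. It needs a careful argument that each mode, with its fixed parity, contributes only one eigenvalue, and that this eigenvalue increases in $n$. I would rely on Tran's closed form~\cite[Theorem 4.2]{tran01} to confirm the $n=2$ values, and use the Rayleigh-quotient monotonicity for all larger $n$.
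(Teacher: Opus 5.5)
Your proposal is correct, and it takes a genuinely different route from the paper. The paper's proof reads the spectrum directly off Tran's closed form \cite[Theorem 4.2]{tran01} and adds a numerical shooting check. You instead rederive the low modes from explicit solutions of \eqref{eq:cat-ode}, using only $T\tanh T=1$. You remove the $n=0$ even mode through $\zeta\in\mathrm{J}_0^0$ and Lemma~\ref{lem:jacobi-ext}, and you control all modes $n\ge 2$ at once by monotonicity in $n$. The paper's route is shorter but outsources all of the spectral content, and its numerical check is confirmation, not proof. Yours is self-contained and needs no closed form for general $n$. Your values for $n=0,1$ are all correct.

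Three points need tightening.

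\emph{The monotonicity class.} The monotonicity step cannot be phrased as a minimum over Jacobi extensions, because that class changes with $n$ (and for fixed parity it is a single function up to scale). Say instead that, for $n\ge1$, $\delta_n^\pm$ is the minimum of your quotient over \emph{all} $H^1$ functions of the given parity with $f(T)\neq0$. To see this, write such an $f$ as $a f_n+g$ with $g\in H^1_0$. The cross term integrates to zero, and $Q_n(g)=Q_0(g)+n^2\int g^2>0$ for $g\neq0$, since $Q_0\ge0$ on $H^1_0$ ($\zeta>0$ is the Dirichlet ground state, with eigenvalue $0$). With this $n$-independent class you get $\delta_{n+1}^\pm>\delta_n^\pm$ for all $n\ge1$. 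The positivity is essential: at $n=0$ it fails ($Q_0(\zeta)=0$), and the quotient is unbounded below on even functions.

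\emph{One eigenvalue per mode and parity.} This is not actually an obstacle. The fixed-parity boundary data in mode $n$ form a one-dimensional space. For $n\ge1$ the Jacobi extension is unique, since $\mathrm{J}_0^0=\mathrm{span}\{\zeta\}$ lies in the $n=0$ mode.

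\emph{The case $n=2$.} You need neither Tran nor a Sturm comparison here. The even solution $f_2^+=\cosh^2t$ gives $\delta_2^+=2T\tanh T=2$. The odd solution $f_2^-=\sinh 2t+\tanh t$, together with $\cosh^2T=T^2/(T^2-1)$, gives $\delta_2^-=(3T^4+1)/(3T^2-1)=2+3(T^2-1)^2/(3T^2-1)>2$. Monotonicity then places every eigenvalue with $n\ge2$ at or above $2$, with equality only for $n=2$ even, which completes the statement.
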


\begin{proof}
	This is immediate from Tran's closed-form spectrum~\cite[Theorem 4.2]{tran01}: the $n=0$ odd eigenvalue $1/\sinh^2 T$ is the largest eigenvalue below $1$, and the $n\geq 2$ eigenvalues exceed $1$, the smallest being $\delta \approx 2$ at $n=2$. The gap is confirmed numerically by the ODE-shooting solver of Section~\ref{sec:flow} to tolerance $10^{-8}$.
\end{proof}

The gap is the free-boundary analogue of the spectral separation underlying the index characterizations of minimal surfaces in $\mathbb{S}^3$~\cite{urbano01, marquesneves01}: a non-equatorial free boundary minimal annulus in $\mathbb{B}^3$ has index at least $4$, and the critical catenoid is the unique annulus realizing equality.

\section{Spectral Flow of an Operator Homotopy}\label{sec:flow}

The computation of Section~\ref{sec:index} fixes a single operator. We now ask how the spectrum moves as the operator is deformed. The Morse index changes as Jacobi-Steklov eigenvalues cross the threshold $\delta = 1$ or leave the Dirichlet-to-Neumann spectrum; tracking these events reveals the spectral flow of the operator.

\subsection{The homotopy}

We deform the coefficients of the Robin problem~\eqref{eq:cat-robin} by a parameter $s \in [0,1]$,
\begin{equation}\label{eq:homotopy}
	T(s) = 0.3 + s\,(T - 0.3), \qquad \mu(s) = s,
\end{equation}
where $T \approx 1.1997$ and $\mu$ controls the strength of the potential $2\mu/\cosh^2 t$. At $s = 1$ the operator is the catenoid Jacobi-Steklov operator of Section~\ref{sec:index}; at $s = 0$ the potential vanishes and the operator reduces to the flat Laplacian on a short strip. We emphasize that the intermediate values $0 < s < 1$ are operator-theoretic interpolations, not free boundary minimal surfaces: by the rigidity proved in Section~\ref{sec:rigidity}, no FBMS corresponds to the parameters $(T(s), \mu(s))$ for $0 < s < 1$. What the computation reveals is how many eigenvalue crossings, and of which Fourier modes, separate the flat reference spectrum from the catenoid spectrum.

\subsection{Numerical results}

At each $s$ we compute the Jacobi-Steklov eigenvalues by high-accuracy ODE shooting (relative tolerance $10^{-8}$) for the modes $n = 0,1,2$, and we validate with PINN spot-checks at $s = 0.2, 0.5, 0.8, 1.0$. Figure~\ref{fig:flow} shows the spectral flow. The eigenvalues move continuously with $s$, and several cross the threshold $\delta = 1$. The resulting Morse index is piecewise constant, with the plateaus of Table~\ref{tab:plateau}.

\begin{table}[ht]
	\centering
	\begin{tabular}{p{2.4cm}cp{4.7cm}p{3.9cm}}
		\hline
		\textbf{Parameter interval} & \textbf{Index} & \textbf{Index contributions ($\delta < 1$, with mult.)}            & \textbf{Triggering event}                                \\
		\hline
		$s = 0$                     & $5$            & $n{=}0$ even, $n{=}1$ even ($\times 2$), $n{=}2$ even ($\times 2$) & flat reference state                                     \\
		$(0, 0.4)$                  & $7$            & the above, plus $n{=}0$ odd and the fixed-boundary field $\zeta$   & $n{=}0$ odd drops below $\delta{=}1$; $\zeta$ enters     \\
		$[0.4, 1)$                  & $5$            & $n{=}0$ even, $n{=}1$ even ($\times 2$), $n{=}0$ odd, $\zeta$      & $n{=}2$ even pair exits above $\delta{=}1$               \\
		$s = 1$                     & $4$            & $n{=}1$ even ($\times 2$), $n{=}0$ odd, $\zeta$                    & $n{=}0$ even eigenvalue leaves the spectrum ($u(T){=}0$) \\
		\hline
	\end{tabular}
	\caption{Morse index plateaus along the operator homotopy~\eqref{eq:homotopy}. The contributions list the Dirichlet-to-Neumann modes with $\delta < 1$ (with multiplicity; $n = 0$ modes are simple, $n \geq 1$ modes doubled by $\cos n\theta, \sin n\theta$) together with the fixed-boundary field $\zeta$; their total is the index. The index takes the values $5$ at $s = 0$, $7$ on $(0, 0.4)$, and $5$ on $[0.4, 1)$, dropping to $4$ precisely at the catenoid endpoint $s = 1$, where the $n{=}0$ even eigenfunction satisfies the free-boundary condition $u(T) = 0$ and the mode leaves the Dirichlet-to-Neumann spectrum. The value at $s = 1$ reproduces the catenoid index $4$ of Section~\ref{sec:index}, with contributions $n{=}1$ even ($\times 2$), $n{=}0$ odd, and $\zeta$.}
	\label{tab:plateau}
\end{table}

The index is non-monotone along the homotopy, rising to $7$ before settling at the catenoid value $4$. The transient rise comes from modes that sit below threshold while the strip is short and the potential weak but leave the count as the catenoid geometry develops: the $n{=}2$ even pair crosses back above $\delta = 1$ near $s = 0.4$, while the $n{=}0$ even eigenvalue decreases monotonically and diverges to $-\infty$ as $s \to 1$. The divergence is exact and meaningful: the $n{=}0$ even eigenfunction is the $s$-deformation of the Jacobi field $\zeta$ of~\eqref{eq:zeta}, whose boundary value $u(T)$ vanishes \emph{only} when $\cosh T = T\sinh T$, i.e.\ only at the catenoid $s = 1$; there the mode leaves the Dirichlet-to-Neumann spectrum and is counted instead by the fixed-boundary term. The index therefore drops to its catenoid value $4$ exactly when the free-boundary condition is met. Only the endpoints, the flat strip at $s = 0$ and the critical catenoid at $s = 1$, are spectrally meaningful in the geometric sense; what the computation establishes unambiguously is the count and Fourier type of the crossings, which an analytic argument would reproduce only through detailed spectral estimates.

\begin{figure}[H]
	\centering
	\includegraphics[width=0.95\textwidth]{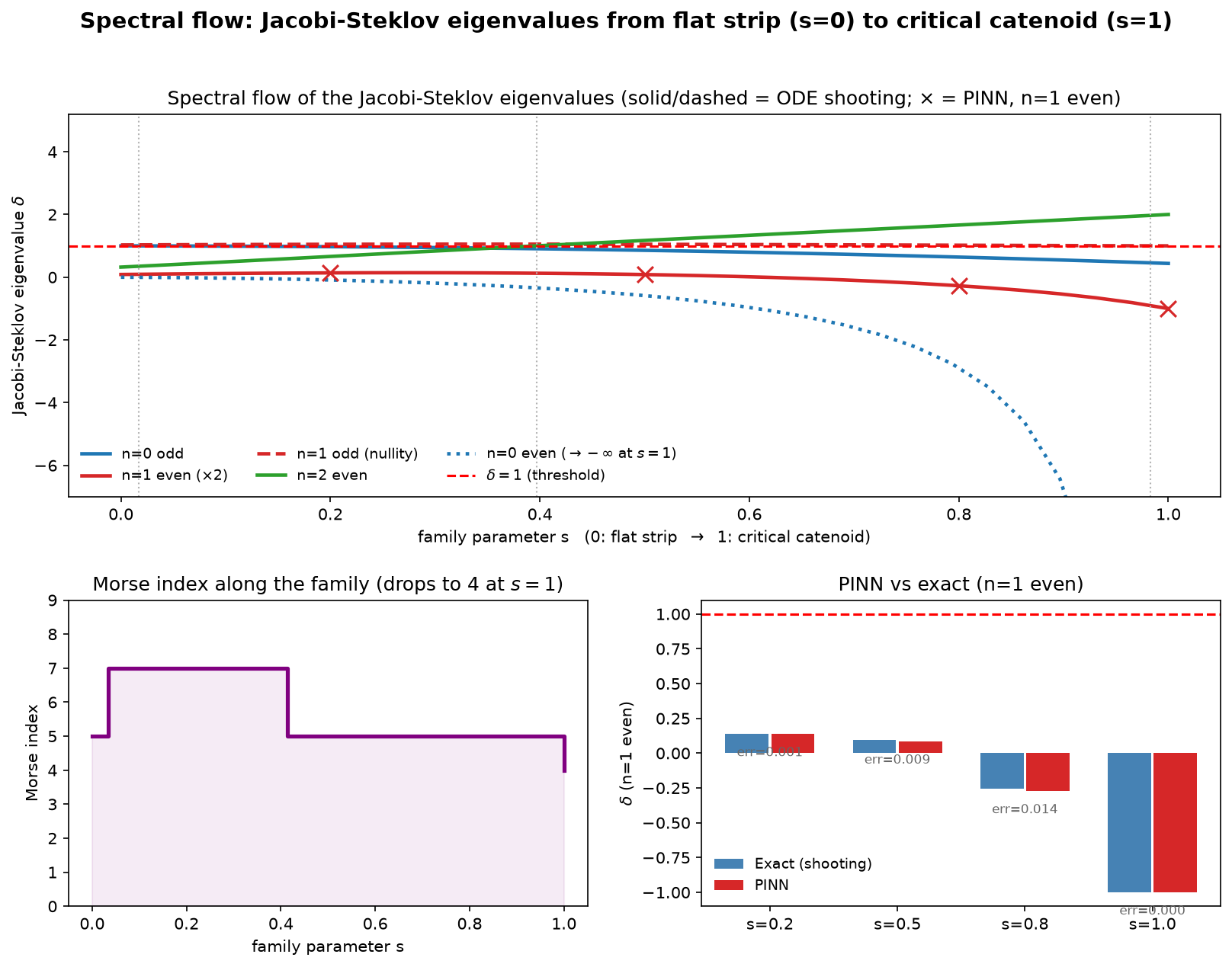}
	\caption{Spectral flow along the operator homotopy~\eqref{eq:homotopy}. Top: Jacobi-Steklov eigenvalues $\delta(s)$ for the Fourier modes $n = 0,1,2$ (ODE shooting, solid/dashed) with PINN spot-checks (crosses) on the $n=1$ even mode; the red dashed line is the threshold $\delta = 1$ and the dotted verticals mark index jumps. Lower left: the Morse index staircase. Lower right: PINN versus exact eigenvalue at four parameter values, with the error growing as the $n=1$ even eigenvalue moves away from $0$.}
	\label{fig:flow}
\end{figure}

\section{Rigidity of the Catenoid and Genuinely Geometric Families}\label{sec:rigidity}

\subsection{The critical catenoid is rigid}

We justify the claim, used in Section~\ref{sec:flow}, that no free boundary minimal surface corresponds to the intermediate parameters of the homotopy. The argument is a short rigidity computation.

\begin{proposition}\label{prop:rigid}
	The free-boundary condition determines $T$ uniquely as the root of $\cosh T = T\sinh T$; the radius condition then fixes $c = 1/(T\cosh T)$; and minimality fixes the Jacobi potential to $2/\cosh^2 t$, that is, $\mu = 1$. The triple $(T, c, \mu)$ admits no free parameter.
\end{proposition}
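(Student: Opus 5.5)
I will prove the statement in three steps. Throughout, I work within the class of catenoids $X(t,\theta) = c(\cosh t\cos\theta, \cosh t\sin\theta, t)$, $t\in[-T,T]$, centred at the origin and symmetric about the plane $x_3=0$. This is the class in which the homotopy~\eqref{eq:homotopy} is phrased. Within it the only a priori parameters are the neck scale $c>0$, the truncation height $T>0$, and the coefficient $\mu$ multiplying the potential $2/\cosh^2 t$ in~\eqref{eq:cat-ode}.

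\textbf{Step 1: $\mu$.} I would dispose of $\mu$ first, since it involves no boundary data. For any minimal surface in flat $\mathbb{R}^3$ the Jacobi operator is $\Delta_\Sigma + |h|^2$. For the catenoid, the computation preceding~\eqref{eq:cat-jacobi} gives $|h|^2 = 2/(c^2\cosh^4 t)$ for \emph{every} $c$. Multiplying by $\rho^2 = c^2\cosh^2 t$ gives the potential $2/\cosh^2 t$, independent of $c$ and $T$. So $\mu = 1$ is forced by minimality alone. Any $\mu \neq 1$ would require $|h|^2 = 2\mu/(c^2\cosh^4 t)$, but the principal curvatures of the catenoid are $\pm 1/(c\cosh^2 t)$, so $|h|^2$ is rigid.

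\textbf{Step 2: $c$.} Next I impose the radius condition, namely that both boundary circles lie on $\partial\mathbb{B}^3$. This gives $|X(\pm T,\theta)|^2 = c^2(\cosh^2 T + T^2) = 1$.

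\textbf{Step 3: $T$.} Then I impose orthogonality. The free-boundary condition says the conormal $\eta=\rho^{-1}\partial_t X$ at $t=T$ is parallel to $X$, or equivalently $\langle X,\nu\rangle = 0$ there. By~\eqref{eq:zeta} this is $\cosh T - T\sinh T = 0$. Combining it with the radius condition gives $c^2\cosh^2 T\,(1 + T^2\sinh^2 T/\cosh^2 T \cdot \coth^2 T)$. More simply, using $T = \coth T$ we get $\cosh^2 T + T^2 = \cosh^2 T(1 + 1/\sinh^2 T)= \cosh^4 T/\sinh^2 T$. Hence $c = \sinh T/\cosh^2 T = 1/(T\cosh T)$, the stated value.

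\textbf{Uniqueness of $T$ (the main obstacle).} Let $g(T) = \cosh T - T\sinh T$. Then $g(0)=1>0$ and $g'(T) = -T\cosh T<0$ for $T>0$, and $g(T)\to-\infty$. So $g$ is strictly decreasing on $(0,\infty)$ and has exactly one positive root, $T\approx1.1997$. By evenness the negative root is the same truncation. So $T$ is uniquely determined, $c$ is then determined by Step 2, and $\mu=1$ by Step 1.

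\textbf{Conclusion.} The triple $(T,c,\mu)$ is a single point. In particular, for $0<s<1$ the parameters $(T(s),\mu(s))$ in~\eqref{eq:homotopy} satisfy $\mu(s)\neq 1$ and $g(T(s))>0$. Neither condition is compatible with being a free boundary minimal catenoid, which is exactly what Section~\ref{sec:flow} uses.

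A caveat: this is rigidity within the symmetric catenoidal family, not among all annuli. Extending it to all annuli would be the open Fraser conjecture, and I would state the proposition with this restriction made explicit.
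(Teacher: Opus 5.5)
Your proposal is correct and follows the paper's proof essentially step for step, only reordered. The common ingredients are orthogonality via $\langle X,\nu\rangle = 0$, strict monotonicity of $g(T)=\cosh T - T\sinh T$, the radius equation $c^2(\cosh^2T+T^2)=1$ simplified using $T=\coth T$, and $\rho^2|h|^2 = 2/\cosh^2 t$ forcing $\mu=1$; your observation that $g\to-\infty$ usefully supplies existence of the root, which the paper leaves implicit. Delete the expression in Step 3 just before ``More simply'': it is not an equation and reduces to $c^2\cosh^2T\,(1+T^2)$, which is wrong, whereas the computation that follows it is correct and is all you need.
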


\begin{proof}
	\emph{Free-boundary condition.} The surface meets $\partial\mathbb{B}^3$ orthogonally if and only if the radial vector $X$ is tangent to $\Sigma$ along $\partial\Sigma$, equivalently $\langle X, \nu\rangle = 0$ there. From the parametrization a direct computation gives $\langle X, \nu\rangle = (c/\cosh t)(\cosh t - t\sinh t)$ as in~\eqref{eq:zeta}, so at $t = T$ the condition is $\cosh T = T\sinh T$.

	\emph{Uniqueness of $T$.} Let $g(T) = \cosh T - T\sinh T$. Then $g(0) = 1 > 0$ and $g'(T) = -T\cosh T < 0$ for $T > 0$, so $g$ is strictly decreasing and has the single root $T \approx 1.19968$. Thus $T$ is determined uniquely.

	\emph{Determination of $c$ and $\mu$.} The condition $|X| = 1$ at $t = T$ reads $c^2(\cosh^2 T + T^2) = 1$; using $\cosh T = T\sinh T$ this simplifies to $c = 1/(T\cosh T)$. Finally, the Jacobi potential is $2/\cosh^2 t$, since
	\begin{equation}
		|h|^2 \cdot (c^2\cosh^2 t) = \frac{2}{c^2\cosh^4 t}\cdot(c^2\cosh^2 t) = \frac{2}{\cosh^2 t},
	\end{equation}
	that is, $\mu = 1$ with no free parameter. Hence a family with $\mu(s) = s$ and $T(s)\neq T$ satisfies none of these conditions for $0 < s < 1$.
\end{proof}

\subsection{Why varying the radius is trivial}

A natural attempt at a geometric family deforms the ambient ball radius. This fails. If $\Sigma \subset \mathbb{B}^3_R$ is free boundary minimal, then $\Sigma/R \subset \mathbb{B}^3_1$ is too, because both minimality and orthogonal intersection are scale-invariant; the free-boundary condition $\cosh T = T\sinh T$ is unchanged (the radius cancels), so $T$ does not move. The entire Jacobi-Steklov problem rescales covariantly, the boundary mean curvature becomes $1/R$ and the threshold becomes $\delta < 1/R$, and the count rebalances exactly. Hence the index is $4$ for every $R$, and the radius family carries no spectral flow.

\subsection{Ellipsoidal balls}

A genuinely nontrivial family must deform the ambient domain non-conformally. The natural choice is the ellipsoidal ball with semi-axes $(1, 1, 1+\varepsilon)$. For $\varepsilon \neq 0$ the boundary second fundamental form $h^{\partial\Omega}(\nu, \nu)$ is no longer constant along $\partial\Sigma$. Theorem~\ref{thm:tran} assumes precisely that this curvature is constant, so it no longer applies, and we are not aware of an index characterization valid in the non-constant case. Supplying one, for instance an eigenvalue-comparison argument that brackets the index between auxiliary operators built from the infimum and supremum of $-h^{\partial\Omega}(\nu, \nu)$, is an open problem; in particular, the Morse index in such ellipsoidal balls is not determined by existing theory. The Jacobi-Steklov problem on such a surface is also genuinely two-dimensional, with no separation of variables, so the one-dimensional Robin solver of Section~\ref{sec:pinn} would be replaced by a two-dimensional PINN over the surface domain, while the loss structure, the eigenvalue-as-parameter device, and the index-counting procedure carry over unchanged. As $\varepsilon$ increases from $0$, the index may jump as eigenvalues cross the now $\varepsilon$-dependent threshold. Tracking these crossings would produce the first genuinely geometric spectral flow for free boundary minimal surfaces; establishing the comparison estimate that would make such a count rigorous, and carrying out the computation, is the direction we intend to pursue.

\section{Discussion}

Two points deserve emphasis. The first is the parity constraint~\eqref{eq:parity}, without which the problem is essentially intractable for a network: an unconstrained net settles into mixed-parity configurations that drive down the interior residual while badly mis-estimating the eigenvalue. Once parity is built in, gradient descent alone recovers the eigenvalue to within $10^{-4}$, since the eigenfunctions are smooth and low-frequency and well within the reach of a modest network. We make no claim that this beats a classical eigensolver on a one-dimensional problem; the point is that the same construction carries over, unchanged in structure, to the two-dimensional setting where separation of variables is no longer available.

The second is the spectral flow of Section~\ref{sec:flow}. It is not geometric, but it is still informative: it counts and classifies the crossings separating a trivial reference operator from the catenoid operator, information an analytic argument would extract only with careful estimates. We are deliberate in calling it an operator homotopy rather than a deformation through surfaces, because Proposition~\ref{prop:rigid} shows there are no intermediate surfaces to deform through. The same observation is what tells us that a genuinely geometric spectral flow has to be sought elsewhere, in the ellipsoidal setting.

What Sections~\ref{sec:pinn}--\ref{sec:rigidity} provide, then, is a single pipeline: a parity-constrained eigenvalue solver calibrated against the one case with a closed-form answer, a spectral-flow procedure, and the index-counting rule of Theorem~\ref{thm:tran}. None of it is special to one dimension, so it should apply to genuinely geometric families as soon as the surfaces are available as parametrized domains. Section~\ref{sec:rigidity} points to where to look first: the ellipsoidal family, where the boundary curvature varies, Tran's formula no longer applies directly, and the index is not yet known.

\newpage

\newpage

\bibliographystyle{plain}
\bibliography{references}

\end{document}